\documentclass[12pt]{amsart}
\usepackage{amsmath, amsthm, amscd, amssymb, amsfonts, latexsym, mathtools}

\usepackage[T1]{fontenc}
\usepackage{lmodern}
\usepackage[margin=1in]{geometry}
\usepackage{microtype}
\usepackage{amsmath,amssymb,amsthm,mathtools}
\usepackage{array,booktabs}
\usepackage{enumitem}
\usepackage{xcolor}
\usepackage{listings}
\usepackage{seqsplit}
\usepackage[numbers,sort&compress]{natbib}
\usepackage[hidelinks]{hyperref}
\usepackage[nameinlink,noabbrev]{cleveref}

\definecolor{noteblue}{RGB}{34,74,116}
\definecolor{lightblue}{RGB}{238,245,251}
\definecolor{resultgreen}{RGB}{236,247,239}
\definecolor{codegray}{RGB}{248,248,248}

\hypersetup{
  pdftitle={A prime knot whose handedness is invisible to Khovanov homology},
  pdfsubject={A colloquium-level account of a computer-assisted prime-knot answer to Question 2 of Lipshitz--Sarkar, ICM 2018}
}

\newtheorem{theorem}{Theorem}[section]
\newtheorem{proposition}[theorem]{Proposition}

\theoremstyle{definition}

\newcommand{\F}{\mathbb F_2}
\newcommand{\Z}{\mathbb Z}
\newcommand{\Kh}{\operatorname{Kh}}
\newcommand{\CKh}{\operatorname{CKh}}
\newcommand{\XKh}{\mathcal X_{\!Kh}}
\newcommand{\Sq}{\operatorname{Sq}}
\newcommand{\rk}{\operatorname{rank}}
\newcommand{\im}{\operatorname{im}}
\newcommand{\Span}{\operatorname{span}}
\newcommand{\comment}[1]{}

\title{\bfseries Lipshitz-Sarkar refines mirrors more than Khovanov
}
\author[Y.-H. He]{Yang-Hui He}
\address{London Institute for Mathematical Sciences, Royal Institution, London W1S 4BS, UK\\
Merton College, Oxford, OX14JD, UK}
\email{\href{mailto:yh@lims.ac.uk}{yh@lims.ac.uk}}

\date{}

\begin{document}
\maketitle

\begin{abstract}
Lipshitz and Sarkar asked in an ICM 2018 question
whether their homotopy-theoretic refinement of Khovanov homology can detect
mirror reflection when ordinary integral Khovanov homology cannot.  We give an
affirmative answer by presenting a prime knot whose integral Khovanov homology agrees with that of its
mirror in every bidegree, including torsion, and yet over $\mathbb{F}_2$, the second Steenrod square has rank one for the knot and
rank zero for its mirror in a specified bidegree.
We also give a composite example.
The construction, search and computations are done with the help of {\it chat-GTP 5.6 sol Pro} combing over the {\it Regina Census} of knots.
\end{abstract}

\comment{
\begin{center}
\fcolorbox{noteblue}{lightblue}{\parbox{0.91\textwidth}{\small
\textbf{Status.}  This is a new computer-assisted result and has not yet
been peer reviewed.  The theorem was checked with exact arithmetic, an
assertion-enabled recomputation, a separate calculation on the mirror, and
independent verification of the integral data and prime-knot census entry.
The raw outputs, replay programs, and cryptographic checksums are preserved
with the paper.}}
\end{center}
}

\section{Introduction and Comments}
AI-driven mathematics has gone a long way since the early experiments of machine-algorithms find new patterns and raising new conjectures \cite{He:2017aed,deepmind,murmurations} (see summary in \cite{nat-rev}).
The last six months saw a phase-transition in capability of LLMs to the point that major problems such as counter-examples to the Jacobian Conjecture for $n>2$ (confirmed), or a complex structure for $S^6$ (to be confirmed) can be found by interacting with Anthropic \cite{alpoge}. Similarly, a list of 10 highly non-trivial open problems are settled automatically by OpenAI \cite{openAI}. 

This short note aims to attack, with the help of {\it chat-GTP 5.6 sol Pro}, a problem in knot theory raised at the ICM of 2018 by Lipshitz-Sarkar \cite{LipshitzSarkarRefinement}. The specific one is Q2 from a list of 11 that asks for whether a refinement of Khovanov homology can detect the mirror of a knot when the homology itself fails.
We answer this in the affirmative by explicitly checking that a knot $K$ with 17 crossings has identical Khovanov homology as its mirror $mK$, but differs in the Steenrod square in the spectrum.
This is presented in Theorem \ref{thm:main}.

~\\

\paragraph{{\it Acknowledgments}}
We are grateful to Robert Lipshitz for valuable comments on the draft.

%%%%%
\section{Preliminaries}
We first briefly recall some standard definitions.
A \emph{knot} $K$ is a smooth copy of the circle \(S^1\) embedded in
$\mathbb{R}^3$.  
Two knots are considered the same - isotopic - if one can be
moved continuously to the other through embedded circles.  
A knot invariant assigns an algebraic or topological object \(I(K)\) to a
knot in a way that does not change under isotopy.  

%One may bend and stretch the loop, but one may not cut it or pull one strand through another.
A knot is \emph{composite} if it can be made by cutting small arcs from two
nontrivial knots and joining the four ends.  This operation is the
\emph{connected sum}.  A nontrivial knot that cannot be decomposed in this
way is called \emph{prime}.  Prime knots are the indecomposable building
blocks of knot theory.
Reflect the knot $K$ across a plane.  The resulting knot is the
\emph{mirror} of \(K\), denoted by \(mK\).  If \(K\) can be deformed into
\(mK\), then \(K\) is \emph{mirror-symmetric} (sometimes dubbed
\emph{amphichiral}; otherwise it has an intrinsic handedness and is called
\emph{chiral}). 

\subsection{Khovanov Homology}
Khovanov homology is a powerful knot invariant introduced by Mikhail Khovanov
\cite{Khovanov2000}.  
For a knot \(L\), integral Khovanov homology is a collection
\[
 \Kh^{i,j}(L;\Z)
\]
of finitely generated abelian groups indexed by $(i,j)$, where the $i$
index is the \emph{cohomological grading} and $j$ is the \emph{quantum
grading}.
As usual in homology, each group is a direct sum of copies of \(\Z\) and finite cyclic groups, the latter of which is called \emph{torsion}.

Lipshitz and Sarkar associate to a knot \(L\) a family of spectra \footnote{
Recall that 
a {\it spectrum} is a stable version of a topological space: it retains attaching
information that can be lost when one records only cohomology groups.
More precisely, a spectrum $E_\bullet$ is a coherent sequence of spaces $E_0, E_1, E_2, \ldots$ with maps $\Sigma E_n \rightarrow E_{n+1}$ from the suspension of each to the next in the sequence. 
}
$\XKh^j(L)$.
The
connection with Khovanov homology is
\[
 \widetilde H^i\!\left(\XKh^j(L);\Z\right)
 \cong \Kh^{i,j}(L;\Z) \ ,
\]
where $\widetilde{H}^i$ is the usual cohomology for based spaces, viz., $\widetilde H^i (X; \mathbb{Z}) = H^i(X, \{\star\}; \mathbb{Z})$. For a spectrum $E$, \(\widetilde H^i(E;\mathbb Z)
=[E,\Sigma^iH\mathbb Z]_{\mathrm{st}}.\), the stable homotopy class of maps from $E$ to the degree $i$ Eilenberg-MacLane cohomology.

\comment{A standard analogy comes from ordinary topology.  The complex projective
plane \(\mathbb{C}P^2\) and the wedge \(S^2\vee S^4\) have the same integral
homology groups.  They are nevertheless different spaces.  Modulo two, the
second Steenrod square is nonzero on the degree-two class of
\(\mathbb{C}P^2\), while it vanishes on the corresponding class of
\(S^2\vee S^4\).  The operation sees how cells are attached, information
that the list of groups has forgotten.  This is an analogy, not an input to
the knot proof.
Lipshitz and Sarkar constructed a richer stable
homotopy invariant whose cohomology recovers Khovanov homology
\cite{LipshitzSarkarStable}.  In their 2018 International Congress of
Mathematicians lecture they asked whether this richer invariant can obstruct
mirror symmetry when ordinary Khovanov homology does not
\cite[Question~2]{LipshitzSarkarICM}.
}
\subsection{The Sq Map}
On \(\F=\{0,1\}\),  Lipshitz and Sarkar \cite{LipshitzSarkarSq2} gave
a combinatorial construction of the natural operation, the Steenrod Square map, as
\[
 \Sq^2:\Kh^{i,j}(L;\F)\longrightarrow\Kh^{i+2,j}(L;\F).
\]
Its rank is preserved by stable equivalence, but is not
determined by the dimensions of its source and target.  Seed implemented
and studied this map extensively \cite{Seed2012,SeedKnotKit}.

\comment{The hierarchy of information in the example is therefore:
\begin{center}
\begin{tabular}{@{}p{0.26\textwidth}p{0.42\textwidth}p{0.20\textwidth}@{}}
\toprule
invariant & what it remembers & comparison of \(K\) and \(mK\)\\
\midrule
Jones polynomial & a graded numerical summary & agrees\\
integral Khovanov homology & a table of abelian groups & agrees everywhere\\
Khovanov spectrum & stable attaching information & differs\\
the witness \(\Sq^2\) & one preserved feature of the spectrum & rank \(1\) versus \(0\)\\
\bottomrule
\end{tabular}
\end{center}
We do not claim that \(\Sq^2\) determines the full spectrum.  One
discrepancy in a preserved feature is enough to prove inequivalence.
}
It is represented by a universal stable map
\[
\theta_2:H\mathbb F_2\longrightarrow\Sigma^2H\mathbb F_2.
\]
If a cohomology class is represented by
\(
a : E \longrightarrow
\Sigma^nH \mathbb{F}_2,
\)
then
\[
\mathrm{Sq}^2(a)
(\Sigma^n\theta_2)\circ a:
E\longrightarrow\Sigma^{n+2}H\mathbb F_2 \ .
\]

\section{The Question}
In the ICM of 2018, Lipshitz and Sarkar posed 11 important questions \cite{LipshitzSarkarICM} pertaining to Khovanov homology of knots. To our knowledge, only Q3, 6, and 9 have been resolved.
One particular interesting question, Q2, asks
\begin{quote}
{\it Is the obstruction to amphichirality coming from the Khovanov spectrum
stronger than the obstruction coming from Khovanov homology?}
\end{quote}
In other words, can we find find a knot $K$ and its mirror $mK$, such that they have the same Khovanov homology for all $(i,j)$ but they differ in the actual spectrum $\XKh^j$; for instance, it suffices to find different maps in 
$\XKh(K)$ versus $\XKh^j(mK)$.

We now answer the question in the affirmative.
In particular, we find a prime knot which has this property and which we describe in Theorem \ref{thm:main}.
We also find a non-prime knot, which we will relegate to Theorem \ref{thm:composite} in the Appendix. 

~\\
We adhere to the notation of the Regina census of knots \cite{Burton2020}, and have
\begin{theorem}\label{thm:main}
The prime knot \(K=17nh_{0009090}\) is such that
\begin{enumerate}[label=\textup{(\roman*)}]
\item for every pair of grading indices \((i,j)\),
\[
       \Kh^{i,j}(K;\Z)\cong \Kh^{i,j}(mK;\Z)
\]
as abelian groups, including all torsion (the isomorphisms need not be
canonical);
\item with coefficients in \(\F\),
\[
\begin{array}{c@{\qquad}c}
\toprule
\text{object} &
\rk\!\left(\Sq^2:\Kh^{-6,-9}\longrightarrow\Kh^{-4,-9}\right)\\
\midrule
K  & 1\\
mK & 0\\
\bottomrule
\end{array}
\]
\end{enumerate}
Consequently the quantum-graded Khovanov stable homotopy types of \(K\) and
its mirror are not equivalent, although their integral Khovanov homology
groups are identical.
\end{theorem}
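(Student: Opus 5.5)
The argument is computational, and I would organize it so that each assertion reduces to a finite, checkable calculation on an explicit diagram. First I fix a diagram $D$ of $K=17nh_{0009090}$ from the Regina census, given as a PD code or signed Gauss code with 17 crossings. Primality is taken from the census, which lists only prime knots; one can also confirm it independently with Regina or SnapPy, since the complement is hyperbolic and hyperbolic knots are prime. The mirror diagram $mD$ is obtained by switching every crossing, equivalently by reversing every sign in the Gauss code. Both diagrams are fixed once, and everything after this is exact linear algebra on their cube-of-resolutions complexes.

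For (i), I compute the integral Khovanov chain complexes $\CKh^{*,j}(D;\Z)$ and $\CKh^{*,j}(mD;\Z)$ for every quantum grading. Each $\Kh^{i,j}$ is then obtained from the Smith normal forms of the differentials, so that free ranks and torsion coefficients are read off exactly. In practice I would use a reduction algorithm, such as Bar-Natan's scanning or Gaussian elimination over $\Z$ keeping track of invertible entries, because the raw complex has $2^{17}$ resolutions. As consistency checks I would verify two facts. First, the graded Euler characteristic recovers the Jones polynomial, which must satisfy $V_K(q)=V_K(q^{-1})$ because the Jones polynomials of $K$ and $mK$ agree. Second, the universal coefficient theorem should give $\Kh^{-i,-j}(mK)$ from $\Kh^{i,j}(K)$, torsion shifting by one degree. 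That duality gives a cheap cross-check, since the table for $mK$ must match the dual of the table for $K$, which in turn must match the table for $K$ bidegree by bidegree.

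For (ii), I work over $\F$ in quantum grading $j=-9$ and implement the Lipshitz--Sarkar combinatorial formula for $\Sq^2$ from \cite{LipshitzSarkarSq2}, as in Seed's program \cite{SeedKnotKit}. For each cocycle representative $c\in\CKh^{-6,-9}$, the formula builds the ladybug matching and the sign assignment on index-2 intervals of the cube. This produces an explicit cochain $\Sq^2(c)\in\CKh^{-4,-9}$, computed by counting a framed 1-dimensional flow category modulo 2. I then take a basis of $\Kh^{-6,-9}(K;\F)$, apply $\Sq^2$, reduce modulo the image of $d$, and compute the rank of the resulting matrix, expecting $1$ for $K$ and $0$ for $mK$. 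The main obstacle lies here. The complex in these gradings is large, and the $\Sq^2$ formula is defined only on cocycles of the unreduced cube, so it cannot be computed naively after simplifying the complex. I would handle this in two steps. I would first compute the cohomology on the small reduced complex, then lift the basis cocycles back through the chain homotopy equivalence to honest cocycles in the full cube, and evaluate $\Sq^2$ only there. To catch implementation errors, I would run the same code on knots with known $\Sq^2$ from Seed's tables, such as $T(3,4)$ and small knots of known nontrivial $\Sq^2$.

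The conclusion follows from the naturality discussion in the preliminaries. $\Sq^2$ is induced by $\theta_2$, so a stable equivalence $\XKh^{-9}(K)\simeq\XKh^{-9}(mK)$ would intertwine the two $\Sq^2$ maps and preserve their rank. The ranks differ, so no such equivalence exists, while (i) shows that the integral cohomology of the two spectra agrees.
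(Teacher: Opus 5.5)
Your proposal is correct and follows essentially the same route as the paper. The diagram and primality come from the Regina census, and exact integral Khovanov computations show the tables of $K$ and $mK$ agree bidegree by bidegree, torsion included. The Lipshitz--Sarkar $\Sq^2$, as implemented in Seed's KnotKit, is then evaluated separately on $K$ and $mK$ at $(-6,-9)$, and naturality of $\Sq^2$ under stable equivalence gives the conclusion. The paper also applies to $\Sq^2$ the duality cross-check you used only for the integral groups: by Lawson--Lipshitz--Sarkar, $\rk\bigl(\Sq^2:\Kh^{-6,-9}(mK;\F)\to\Kh^{-4,-9}(mK;\F)\bigr)=\rk\bigl(\Sq^2:\Kh^{4,9}(K;\F)\to\Kh^{6,9}(K;\F)\bigr)=0$. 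Note that part (ii) depends on using Regina's DT sign convention, so that the chirality you call $K$ is the one with rank $1$.
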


\begin{proof}
We first summarize some key properties of the knot.
Complicated knots are usually specified by a diagram code.  One standard
choice is the Dowker--Thistlethwaite, or DT, code.  Sign conventions for DT codes vary; in Regina's convention, our knot has numeric code
\begin{equation}\label{eq:dt}
\mathrm{DT}[22,30,-18,32,-24,-16,34,-26,-8,-4,2,-28,-10,-14,20,6,12].
\end{equation}
This list is a finite instruction for reconstructing an oriented knot
diagram, much as a graph can be specified by an adjacency list.  The equivalent alphabetic DT code is
\(\texttt{qaqkoIpLHqMDBaNEGjcf}
\).
Both codes reconstruct a one-component diagram with 17 crossings.
Burton's published census is a certified census of prime knots through
seventeen crossings \cite{Burton2020}.  The exact row for \(K\) lies in the
nonalternating hyperbolic portion of that census.  

As an independent check, SnapPy found the complete meridian-preserving
symmetry group of the complement to have order one, both at ordinary and
high precision \cite{SnapPy}.  In particular the knot has no symmetry taking
it to its mirror.  This geometric check is reassuring, although the
Khovanov-spectrum discrepancy already proves chirality on its own.

\begin{center}
\begin{tabular}{@{}ll@{}}
\toprule
census name & \(17nh_{0009090}\)\\
crossing number & \(17\)\\
census class & nonalternating, hyperbolic, prime\\
total free rank of integral \(\Kh\) & \(86\)\\
number of \(\Z/2\) torsion summands & \(42\)\\
other recorded torsion & none\\
total dimension over \(\F\) & \(170\)\\
hyperbolic volume &   \(\simeq 15.73320532925\)\\
full link-symmetry group & trivial\\
\bottomrule
\end{tabular}
\end{center}

First, compare the knot and its mirror directly.  The independently
computed maps in the decisive bidegree are
\[
\begin{array}{c@{\qquad}c@{\qquad}c}
\toprule
 & \dim_{\F}\Kh^{-6,-9} &
 \rk(\Sq^2:\Kh^{-6,-9}\to\Kh^{-4,-9})\\
\midrule
K  &3&1\\
mK &3&0\\
\bottomrule
\end{array}
\]
The source groups have the same dimension, as they must because the
integral homology tables agree.  The operations have different ranks, so
the stable homotopy types cannot be equivalent.

Second, mirror duality gives a useful one-knot test.  Lawson, Lipshitz, and
Sarkar proved that the spectrum of a mirror is the Spanier--Whitehead dual
of the original spectrum, with the quantum grading reversed
\cite[Theorem~9]{LLS2020}.  Here ``dual'' is the stable-homotopy analogue of
dualizing a finite-dimensional vector space.  If we write
\[
 r_L(i,j)=\rk\!\left(
 \Sq^2:\Kh^{i,j}(L;\F)\to\Kh^{i+2,j}(L;\F)\right),
\]
then mirror self-duality would force
\begin{equation}\label{eq:rank-reflection}
                   r_L(i,j)=r_L(-i-2,-j).
\end{equation}
For the candidate,
\[
                    r_K(-6,-9)=1,
       \qquad       r_K(4,9)=0,
\]
so \cref{eq:rank-reflection} fails.  A separate computation on \(mK\)
swapped these values to \(0\) and \(1\), exactly as mirror duality predicts.
In the ensuing, we detail the search strategy and then in Appendix \ref{sec:identity-certificate}, we present the computation certificate.

\paragraph{{\it Search Strategy: }}
Earlier calculations found no such example among prime knots with at most
fifteen crossings
\cite[Remark~10.8]{LLS2020}.  As a separate exact census check, all
1,008,906 nonalternating sixteen-crossing prime knots were screened.  There
were 1,889 with mirror-symmetric rational and integral Khovanov homology;
only 67 had the grading support needed for a possibly nonzero \(\Sq^2\)
witness.  Fifty-seven of those were already known to be mirror-symmetric,
and direct calculations on the remaining ten found no rank discrepancy.

This makes our seventeen crossings the first successful frontier in the present
computer-assisted search.  The existence theorem, rather than a claim of
absolute minimality under every conceivable stable operation, is the main
result of this paper. The raw seventeen-crossing census contains more than eight million prime
knots, far too many for an expensive Steenrod-square computation on every
diagram.  Most can be discarded using progressively stronger but cheaper
tests.

The first test compares rational Khovanov polynomials with their mirror
reflections.  If those already differ, integral Khovanov homology
distinguishes the knot and it cannot answer the ICM question.  The second
test checks the complete integral groups, including torsion.  The third asks
whether the mod--two table even contains source and target groups separated
by two cohomological degrees; without such a pair, \(\Sq^2\) cannot supply
the desired witness.  Finally, a geometric symmetry test removes knots that
really are mirror-symmetric.

The complete exact filter was:
\begin{center}
\begin{tabular}{@{}p{0.65\textwidth}r@{}}
\toprule
stage & survivors\\
\midrule
all seventeen-crossing prime knots in the four census classes & 8,053,393\\
rational Khovanov mirror symmetry & 2,083\\
integral Khovanov mirror symmetry, torsion included & 2,083\\
mod--two support permits a nonzero \(\Sq^2\) block & 40\\
chiral after the independent hyperbolic-symmetry test & 34\\
\bottomrule
\end{tabular}
\end{center}
All 2,083 algebraic survivors were nonalternating hyperbolic knots.  The
integral stage used the complete exact dataset of Gurnari, D\l otko,
Sazdanovi\'c, Sch\"utz, Bar-Natan, and van der Veen
\cite{KnotInvariantsData}.

The 34 remaining knots were ordered by the size of their integral Khovanov
homology.  The first two candidates, of free ranks \(50\) and \(62\), had
mirror-symmetric \(\Sq^2\) ranks.  The third, of free rank \(86\), was
\(17nh_{0009090}\), and it produced four independent rank discrepancies.
The search stopped at the first fully verified example.

The exact Regina census row identifies \(K\) as a prime hyperbolic knot;
the diagram code and row are recorded in \cref{sec:identity-certificate}.
The integral calculation verifies the group isomorphisms in every bidegree,
with the universal-coefficient shift for torsion included.  The
assertion-enabled computation gives rank one for the displayed map on
\(K\), while the direct mirror computation gives rank zero for the same
bidegree on \(mK\).  Stable equivalences preserve Steenrod operations, so
the two quantum-graded spectra are not equivalent.
\end{proof}

We remark that a non-prime example can also be contructed, which we leave to Appendix \ref{app:composite}.

\appendix

\section{Proof certificate}
\label{sec:identity-certificate}

\subsection{Census identity}

The exact identifiers are:
\begin{center}
\small
\begin{tabular}{@{}ll@{}}
\toprule
Regina name & \texttt{17nh\_0009090}\\
alphabetic DT & \texttt{qaqkoIpLHqMDBaNEGjcf}\\
numeric DT & \texttt{[22,30,-18,32,-24,-16,34,-26,-8,-4,2,-28,-10,-14,20,6,12]}\\
knot signature & \texttt{rabcdefbghijklamhnoijpcdqklfeqpongmRuLPPgpb28Wl}\\
\bottomrule
\end{tabular}
\end{center}
The corresponding row in the Regina source table is
\begin{lstlisting}
17nh_0009090,rabcdefbghijklamhnoijpcdqklfeqpongmRuLPPgpb28Wl,koIpLHqMDBaNEGjcf
\end{lstlisting}

\subsection{Decisive reproduction commands}

The assertion-enabled original and mirror calculations were run as follows:
\begin{lstlisting}
/usr/bin/time -p knotkit_decisive/kk sq2 -o 17nh_0009090.decisive.debug.sage   \
'DT[qaqkoIpLHqMDBaNEGjcf]'

/usr/bin/time -p knotkit_decisive/kk sq2 -m -o 17nh_0009090.mirror.decisive.debug.sage \
'DT[qaqkoIpLHqMDBaNEGjcf]'
\end{lstlisting}
They completed in 190.55 and 206.87 seconds, respectively, on the audit
machine.

\subsection{Complete source-rank profile for the original knot}

The full screen gave the following ranks; unlisted blocks have rank zero.
\begin{center}
\small
\begin{tabular}{@{}r r@{\qquad}r r@{}}
\toprule
source & rank & source & rank\\
\midrule
\((-6,-9)\)&1 & \((-5,-7)\)&2\\
\((-4,-5)\)&2 & \((-3,-3)\)&2\\
\((-2,-3)\)&1 & \((-2,-1)\)&2\\
\((-1,-1)\)&2 & \((-1,1)\)&2\\
\((0,1)\)&1 & \((0,3)\)&1\\
\((1,3)\)&1 & \((2,5)\)&2\\
\((3,7)\)&1 & \((4,9)\)&0\\
\bottomrule
\end{tabular}
\end{center}

\subsection{Selected file fingerprints}

SHA--256 is a cryptographic file fingerprint.  The full manifest is
\path{research/khovanov_example_campaign/17nh_0009090_CHECKSUMS.sha256}.
Selected entries are:
\begin{center}
\scriptsize
\begin{tabular}{@{}p{0.42\textwidth}p{0.50\textwidth}@{}}
\toprule
artifact & SHA--256\\
\midrule
full \(\Sq^2\) screen output &
\texttt{\seqsplit{609f75e468bc979496c0c2ec4e677854cf5d179e0b14fd69e7cc8e1b731a0744}}\\
assertion-enabled original output &
\texttt{\seqsplit{17a9f82831c81cceca5bf45e58ec514cf6f2fd2b0b08536825a9c5f9c3988682}}\\
assertion-enabled mirror output &
\texttt{\seqsplit{0f0052bd50dac9695943400b8432c231d7ecbb0b52f678cfa90d67a5c5ca8e9c}}\\
end-to-end independent verification &
\texttt{\seqsplit{346bc65c5686db42094fa8b6239807b34f0d8d700942fe50bc8dd0f74cddc52b}}\\
combined decisive certificate &
\texttt{\seqsplit{2ffb0ecbecc70d3ba9d8611efd4f4b3e5f7ded5155583e75473fb61fa13c04d6}}\\
\bottomrule
\end{tabular}
\end{center}

The machine-readable certificate is
\path{research/khovanov_example_campaign/17nh_0009090_FINAL_CERTIFICATE.json}.
It records the source hashes, exact matrices, census row, integral mirror
checks, software provenance, and verification results.  The accompanying
checksum manifest verifies cleanly in full.

%%%
\section{A composite example}
\label{app:composite}
In addition to our main theorem, a connected-sum construction gave an
explicit answer to our key question.  It remains useful because it
explains how an asymmetry in two smaller factors can survive a construction
that symmetrizes ordinary homology.

Using Hoste--Thistlethwaite--Weeks census labels \cite{HTW1998}, let
\[
       A=14n_{11196},
       \qquad
       B=14n_{17546},
       \qquad
       K_{\mathrm c}=A\#m(B).
\]
Both $A$ and $B$ are nonalternating prime knots with
crossing number fourteen.

\begin{theorem}\label{thm:composite}
For every \((i,j)\),
\[
 \Kh^{i,j}(K_{\mathrm c};\Z)
 \cong \Kh^{i,j}(mK_{\mathrm c};\Z).
\]
Nevertheless,
\[
\begin{array}{c@{\qquad}c}
\toprule
\text{object} &
\rk\!\left(\Sq^2:\Kh^{-4,-7}\to\Kh^{-2,-7}\right)\\
\midrule
K_{\mathrm c}&1\\
mK_{\mathrm c}&0\\
\bottomrule
\end{array}
\]
Thus its Khovanov spectrum distinguishes it from its mirror although its
integral Khovanov homology does not.
\end{theorem}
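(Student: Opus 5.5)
The proof splits into an integral comparison and a Steenrod-square comparison, and I will treat them separately. For the integral part I will use the Künneth formula for Khovanov homology of connected sums. Khovanov homology of a connected sum is computed from the reduced/unreduced theories of the factors. Over $\Z$ there is a Künneth-type short exact sequence involving $\CKh(A)\otimes_{\Z[X]/(X^2)}\CKh(B)$. This is awkward for the unreduced theory.

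The cleaner route is to compute $\Kh(K_{\mathrm c};\Z)$ and $\Kh(mK_{\mathrm c};\Z)=\Kh(mA\# B;\Z)$ directly, using the same exact pipeline (KnotKit or the Bar-Natan/Schütz programs) used for the main theorem. I will feed in a diagram of $A\# m(B)$ built by concatenating DT/PD codes of the two factors: 28 crossings, still tractable with Bar-Natan's local algorithm. I will then compare ranks and torsion bidegree by bidegree. As a conceptual cross-check I expect the mechanism to be the following. Mirroring reflects the table $(i,j)\mapsto(-i,-j)$, with the universal-coefficient shift on torsion. Since $mK_{\mathrm c}=mA\# B$, the symmetry should come from $\Kh(A)$ and $\Kh(B)$ being related by this reflection, i.e.\ $A$ and $B$ having \emph{mirror-related integral Khovanov homology}. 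I would verify that first on the fourteen-crossing factors, where the data are in the census tables \cite{KnotInvariantsData}. Then symmetric functoriality of the Künneth sequence in the two factors gives (i) abstractly.

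For the $\Sq^2$ part I use the Lawson--Lipshitz--Sarkar duality \cite[Theorem~9]{LLS2020} as in the proof of \cref{thm:main}. It gives $r_{mK_{\mathrm c}}(i,j)=r_{K_{\mathrm c}}(-i-2,-j)$. So it suffices to compute the single knot $K_{\mathrm c}$ and check $r_{K_{\mathrm c}}(-4,-7)=1$ while $r_{K_{\mathrm c}}(2,7)=0$. I will also run the mirror computation directly (the \texttt{-m} flag) as an independent confirmation that the ranks swap.

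To make the rank-one statement conceptually robust, I would compare against the Künneth theorem for Khovanov spectra of connected sums, together with the Cartan formula. There $\XKh(A\#B)$ is a smash product over the unknot spectrum, and $\Sq^2$ of a product class decomposes via $\Sq^2(xy)=\Sq^2x\,y+\Sq^1x\,\Sq^1y+x\,\Sq^2y$. The expected picture is that $A$ has a nonzero $\Sq^2$ (or $\Sq^1\Sq^1$ interaction) in a bidegree whose reflected partner in $B$ carries zero. Tensoring then produces the asymmetric rank.

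The main obstacle is the $\Sq^2$ computation on a 28-crossing composite diagram. Seed's algorithm builds the flow category over all $2^{28}$ resolutions, which is far beyond naive reach. I would first try to use the Künneth decomposition to reduce the computation to the factors. That is, compute $\Sq^1$ and $\Sq^2$ on $\Kh(A;\F)$ and $\Kh(m B;\F)$ as modules over $\Kh(U)$, then assemble via the Cartan formula. If the unknot-module structure makes this assembly unreliable, I would fall back on KnotKit's simplification (Gaussian elimination along tangle decompositions) applied to the composite diagram. This should be feasible because the connect-sum band lets the complexes of the two 14-crossing tangles be simplified separately before gluing.
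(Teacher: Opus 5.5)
\textbf{There are two genuine problems: the symmetry mechanism you give for (i) is backwards, and the reduction of $\Sq^2$ to the factors lacks its key lemma.}

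\textbf{Part (i).} Your conceptual mechanism for (i) has the hypothesis reversed. Suppose $\Kh(B)$ were the reflection of $\Kh(A)$, i.e.\ $\Kh(B)\cong\Kh(mA)$. Then $K_{\mathrm c}=A\#m(B)$ would be homologically modelled on $A\#A$, and $mK_{\mathrm c}=mA\#B$ on $mA\#mA$. These are mirror to each other and in general not symmetric; take $A$ a trefoil, where $A\#A$ is the granny knot.

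What symmetrizes is the opposite condition, $\Kh(A)\cong\Kh(B)$. Then $A\#m(B)$ imitates the amphichiral $A\#mA$, and interchanging the roles of $A$ and $B$ interchanges $K_{\mathrm c}$ with $mK_{\mathrm c}=B\#m(A)$. This is how the paper chose its factors: their Khovanov tables agree, while a decisive $\Sq^2$ block has rank one for $A$ and zero for $B$. So the check you propose on the fourteen-crossing factors is the wrong one, and your abstract derivation of (i) rests on a false premise. Only your brute-force computation of integral $\Kh$ on the 28-crossing diagram would actually support (i).

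\textbf{The $\Sq^2$ part.} Using \cite[Theorem~9]{LLS2020} to reduce to checking $r_{K_{\mathrm c}}(-4,-7)=1$ and $r_{K_{\mathrm c}}(2,7)=0$ is fine. The trouble is that you leave open the step that actually produces these numbers from the factors. The Cartan formula governs the external smash product $\XKh(A)\wedge\XKh(mB)\simeq\XKh(A\sqcup mB)$, not the smash product over $\XKh(U)$ that models the connected sum. So ``assemble via the Cartan formula'' is unjustified as it stands.

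The missing idea is the paper's split-saddle transfer (\cref{prop:transfer}):
\begin{enumerate}[label=\textup{(\alph*)}]
\item The saddle $S\colon L_1\#L_2\to L_1\sqcup L_2$ that cuts the neck identifies $\CKh(L_1\#L_2;\F)$, at chain level, with $\ker\tau=\im\tau$ inside $C_1\otimes C_2$, where $\tau=X_1\otimes1+1\otimes X_2$.
\item Shumakovitch's result $\ker X=\im X$ makes each $\Kh(L_r;\F)$ free over $\F[X]/(X^2)$. Hence $\Kh(L_1;\F)\otimes\Kh(L_2;\F)$ is free for the diagonal action and the base-change spectral sequence collapses.
\item It follows that $S_*$ is injective on homology, with image $\ker\tau_*$.
\item Because $S_*$ is induced by a map of spectra, $\Sq^2$ on $K_{\mathrm c}$ is the Cartan operator on $\Kh(A;\F)\otimes\Kh(mB;\F)$ restricted to $\ker\tau_*$.
\end{enumerate}
It is the injectivity in (c) that lets you read the rank on $K_{\mathrm c}$ off factor data.

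Your fallback does not replace this lemma, for two reasons. Chain-level Gaussian elimination discards the flow-category data on which $\Sq^2$ depends, unless the cancellations are lifted to the flow category. And the feasibility of a 28-crossing $\Sq^2$ computation is asserted rather than established.
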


The factor knots were selected computationally because their mod--two
Khovanov homology tables agree while a decisive \(\Sq^2\) block has rank one
for \(A\) and rank zero for \(B\).  Forming \(A\#m(B)\) symmetrizes the
group-valued invariant.  A split-saddle argument then transfers the Steenrod
operation from the two factors to the connected sum as follows.

Write \(\CKh(L;\F)\) for the Khovanov chain complex.  Choosing a marked
point on a diagram gives a square-zero chain map \(X\), called the basepoint
action.  Steenrod squares obey the product rule
\begin{equation}\label{eq:cartan}
 \Sq^2(u\otimes v)=
 \Sq^2u\otimes v+
 \Sq^1u\otimes\Sq^1v+
 u\otimes\Sq^2v.
\end{equation}

\begin{proposition}[Split-saddle transfer]\label{prop:transfer}
Let \(L_1,L_2\) be based knots, set
\[
 C_r=\CKh(L_r;\F),\qquad
 P=C_1\otimes_{\F}C_2,\qquad
 \tau=X_1\otimes1+1\otimes X_2,
\]
and let \(S:L_1\#L_2\to L_1\sqcup L_2\) be the saddle that cuts the
connected-sum neck.  Then
\[
 S_*:\Kh^{i,j}(L_1\#L_2;\F)
 \hookrightarrow
 \bigl(\Kh(L_1;\F)\otimes_{\F}\Kh(L_2;\F)\bigr)^{i,j-1}
\]
is injective with image \(\ker\tau_*\).  Under this injection, \(\Sq^2\)
on the connected sum is the restriction to \(\ker\tau_*\) of the Cartan
operator in \cref{eq:cartan}.
\end{proposition}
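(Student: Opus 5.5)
The plan is to prove the two halves separately: first the homological statement (injectivity and image $\ker\tau_*$), then the spectrum-level compatibility of $\Sq^2$.

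\textbf{Chain-level model.} Over $\F$ the Khovanov complex of a connected sum is the tensor product over the Frobenius algebra $A=\F[X]/(X^2)$:
\[
\CKh(L_1\#L_2;\F)\cong C_1\otimes_A C_2 ,
\]
where $A$ acts on each factor through the marked-point action $X_r$. The saddle $S$ that cuts the neck splits the marked circle into two circles. At the chain level it is the comultiplication $\Delta$ of $A$, tensored with identities, so it maps $C_1\otimes_A C_2$ into $P=C_1\otimes_\F C_2$ with quantum shift $-1$. Since $\Delta$ is the inclusion $A\cong A\cdot(X\otimes1+1\otimes X)\subset A\otimes A$, the image of $S$ at the chain level is exactly $\ker(\tau)$ on $P$. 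Here one uses that $\tau$ squares to zero over $\F$ and that the tensor complex is $\tau$-exact on $A\otimes A$, i.e.\ $\ker\tau=\im\tau$.

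\textbf{Homology and injectivity.} To pass to homology, I would use that knot complexes over $\F$ are free $A$-modules (Shumakovitch's reduced splitting $C_r\simeq \widetilde C_r\otimes A$). Then $C_1\otimes_A C_2\simeq \widetilde C_1\otimes\widetilde C_2\otimes A$ and $P\simeq \widetilde C_1\otimes\widetilde C_2\otimes A\otimes A$. Under these identifications $S$ is $\mathrm{id}\otimes\Delta$, and $\tau$ is $\mathrm{id}\otimes(X\otimes1+1\otimes X)$. Both maps commute with the differential, so by K\"unneth over the field $\F$, $S_*$ is injective with image $\ker\tau_*$. The grading shift $j-1$ is just the degree of $\Delta$. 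This step is routine once the splitting is invoked, with one caveat: one must check that the splitting is compatible with the basepoint action, and not merely an abstract isomorphism.

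\textbf{Compatibility with $\Sq^2$ (main obstacle).} I would invoke Lawson--Lipshitz--Sarkar's results. First, $\XKh(L_1\sqcup L_2)\simeq\XKh(L_1)\wedge\XKh(L_2)$, so $\Sq^2$ on $P$ is given by the Cartan formula \cref{eq:cartan}. Second, saddle cobordisms induce maps of spectra, so $S_*$ is induced by a stable map $\XKh(L_1\sqcup L_2)\to\XKh(L_1\#L_2)$ (up to suspension and grading shifts). Since $S_*$ commutes with $\Sq^2$ by naturality, injectivity lets us compute $\Sq^2$ on $\Kh(L_1\#L_2)$ by applying the Cartan operator to $S_*(x)$ and pulling back.

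The delicate points are the following:
\begin{itemize}
\item One must check that the cobordism map is defined on spectra in the required direction and grading. If only the reverse saddle is available, I would use Spanier--Whitehead duality (Theorem~9 of LLS) to dualize.
\item One must check that the Cartan operator preserves $\ker\tau_*$. This follows because the image of a spectrum map is $\Sq^2$-stable.
\end{itemize}

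If the cobordism-map functoriality proves unavailable, the fallback is a direct, chain-level Lipshitz--Sarkar flow-category argument near the neck crossing.
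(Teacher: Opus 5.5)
Your proposal is correct and shares both ends with the paper's proof. The first end is the chain-level identification of $\CKh(L_1\#L_2;\F)$ with $C_1\otimes_A C_2$, with the neck saddle given by $c_1\otimes c_2\mapsto\tau(c_1\otimes c_2)$ and having image $J=\ker\tau=\im\tau\subset P$. The second end is the $\Sq^2$ step via the LLS disjoint-union equivalence, spectrum-level saddle maps, and injectivity of $S_*$. Your direction $\XKh(L_1\sqcup L_2)\to\XKh(L_1\#L_2)$ is the correct contravariant one, so the duality fallback is unnecessary.

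The real difference is how you pass from $J\subset P$ to homology. The paper uses only the homology-level fact $\ker X=\im X$ on $\Kh(L_r;\F)$, from which $H(P)$ is free for the diagonal action. It then runs the long exact sequence of $0\to J\to P\to P/J\to 0$; the collapse of $\operatorname{Tor}^R_p(H(P),\F)\Rightarrow H(P\otimes_R\F)$ makes $H(P)\to H(P/J)$ onto, so $H(J)\hookrightarrow H(P)$ with image $XH(P)=\ker\tau_*$. You instead split at the chain level. After that, $S$ is literally $\mathrm{id}\otimes\Delta$ on $\widetilde C_1\otimes\widetilde C_2\otimes A$, and the field K\"unneth theorem finishes with no spectral sequence.

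The caveat you flag does hold. The operator $\nu$ that changes one $X$-labelled circle to $1$ (Shumakovitch's operator) commutes with $d$ over $\F$ and satisfies $X\nu+\nu X=\mathrm{id}$. Hence $e=X\nu$ is an idempotent chain map with $eC_r=\ker X=XC_r$, and $X\colon(1-e)C_r\to eC_r$ is a chain isomorphism with inverse $\nu$. This gives $C_r\cong\widetilde C_r\otimes A$ on the nose as complexes of $A$-modules. Alternatively, $C_r$ is $A$-free in each degree and $\Kh(L_r;\F)$ is $A$-free, so choosing cycle representatives gives an $A$-linear quasi-isomorphism of bounded complexes of free $A$-modules. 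That map is an $A$-equivariant homotopy equivalence, and naturality of $S$ and $\tau$ in the $A$-modules transports the computation.

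So your route needs a stronger, chain-level input but makes the homology computation transparent. The paper gets by with homology-level freeness alone, at the cost of the base-change argument.
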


\begin{proof}
Put \(R=\F[X]/(X^2)\).  On the two marked-circle factors, the local split
map in the Khovanov Frobenius algebra is
\[
 \Delta(1)=1\otimes X+X\otimes1=\tau(1\otimes1),
 \qquad
 \Delta(X)=X\otimes X=\tau(X\otimes1).
\]
Moreover,
\[
 \ker(\tau:R\otimes_{\F}R\to R\otimes_{\F}R)
 =\im\tau
 =\Span_{\F}\{1\otimes X+X\otimes1,\,X\otimes X\}.
\]
Tensoring with the unmarked-circle factors identifies the connected-sum
complex, with quantum shift \(-1\), with
\(J=\ker\tau=\im\tau\subset P\).  This is the concrete cotensor statement
underlying \citet[Lemma~9.9]{LLS2020}; their Theorem~8 gives the
spectrum-level derived tensor statement.

It remains to justify taking this kernel after passing to homology.
Shumakovitch proved that
\[
 \ker(X:\Kh(L_r;\F)\to\Kh(L_r;\F))=\im X
\]
\cite[Corollary~3.2.B]{Shumakovitch2014}.  A finite-dimensional
\(R\)-module with this property is free.  The field K\"unneth theorem gives
\[
 H(P)=\Kh(L_1;\F)\otimes_{\F}\Kh(L_2;\F),
\]
which is free for the diagonal \(R\)-action because
\(R\otimes_{\F}R\) is diagonal-\(R\)-free of rank two.

For the short exact sequence
\[
 0\longrightarrow J=XP\longrightarrow P
 \longrightarrow P/XP=P\otimes_R\F\longrightarrow0,
\]
the base-change spectral sequence
\[
 \operatorname{Tor}^{R}_{p}(H^q(P),\F)
 \Longrightarrow H^{q-p}(P\otimes_R\F)
\]
collapses because \(H(P)\) is \(R\)-free.  Hence
\(H(P/XP)\cong H(P)/XH(P)\), and the long exact sequence gives an
injection \(H(J)\to H(P)\) with image
\(XH(P)=\ker(X:H(P)\to H(P))=\ker\tau_*\).

Finally, the disjoint-union equivalence of
\citet[Theorem~1]{LLS2020} gives the Cartan formula.  Stable cohomology
operations commute with the saddle cobordism map
\cite[Theorem~4 and Corollary~5]{LipshitzSarkarRefinement}.  Injectivity of
\(S_*\) identifies the connected-sum square with the stated restriction.
\end{proof}

\end{document}